\newtheorem{theorem}{Theorem}[section]
\newtheorem{lemma}[theorem]{Lemma}
\newtheorem{corollary}[theorem]{Corollary}
\newtheorem{definition}[theorem]{Definition}
\theoremstyle{definition}
\newtheorem{remark}[theorem]{Remark}
\numberwithin{equation}{section}
\begin{document}
\title[Ostrowski type inequalities]{Ostrowski type inequalities for
functions whose derivatives are preinvex}
\author{\.{I}mdat \.{I}\c{s}can}
\address{\textbf{\.{I}mdat \.{I}\c{s}can}\\
Department of Mathematics, Faculty of Science and Arts, Giresun University,
Giresun, Turkey}
\email{imdat.iscan@giresun.edu.tr}
\subjclass[2000]{26D10, 26D15, 26A51}
\keywords{Ostrowski type inequalities, preinvex function, condition C}

\begin{abstract}
In this paper, making use of new an identity, we established new
inequalities of Ostrowski's type for the class of preinvex functions and
gave some midpoint type inequalities.
\end{abstract}

\maketitle

\section{Introduction and Preliminaries}

Let $f:I\subset 
\mathbb{R}
\mathbb{\rightarrow R}$ be a differentiable mapping on $I^{\circ }$, the
interior of $I$, and let $\ a,b\in I^{\circ }$ with $a<b.$ If $\left\vert
f^{\prime }(x)\right\vert \leq M$ for all $x\in \left[ a,b\right] $, then
the following inequalities holds:%
\begin{equation}
\left\vert f(x)-\frac{1}{b-a}\dint\limits_{a}^{b}f(u)du\right\vert \leq 
\frac{M}{b-a}\left[ \frac{\left( x-a\right) ^{2}+\left( b-x\right) ^{2}}{2}%
\right] .  \label{1-a}
\end{equation}%
This result well known in the literature as the Ostrowski's inequality \cite[%
p. 469]{MPF94}. For recent results and generalizations concerning
Ostrowski's inequality see \cite{ADDC10}, \cite{KOA11} and the references
therein.

\begin{definition}
The function $f:\left[ a,b\right] \subset \mathbb{R\rightarrow R}$ is said
to be convex if the following inequality holds:%
\begin{equation*}
f\left( tx+(1-t)y\right) \leq tf(x)+(1-t)f(y)
\end{equation*}%
for all $x,y\in \left[ a,b\right] $ and $t\in \left[ 0,1\right] .$ We say
that f is concave if $\left( -f\right) $ is convex.
\end{definition}

The following theorem contains Hadamard's type inequality for M-Lipschitzian
functions. (see \cite{DCK00}).

\begin{theorem}
Let $f:I\subset \mathbb{R\rightarrow R}$ be an M-Lipschitzian mapping on $I,$
and $a,b\in I$ with $a<b.$ Then we have the inequality:%
\begin{equation}
\left\vert f\left( \frac{a+b}{2}\right) -\frac{1}{b-a}\dint%
\limits_{a}^{b}f(x)dx\right\vert \leq M\frac{\left( b-a\right) }{4}.
\label{1-b}
\end{equation}
\end{theorem}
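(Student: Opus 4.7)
The plan is to rewrite the left-hand side as an average of differences of the form $f\left(\frac{a+b}{2}\right)-f(x)$, and then use the Lipschitz hypothesis pointwise under the integral. Concretely, since $\int_a^b dx = b-a$, we may write
\begin{equation*}
f\left(\frac{a+b}{2}\right) - \frac{1}{b-a}\int_a^b f(x)\,dx = \frac{1}{b-a}\int_a^b \left[f\left(\frac{a+b}{2}\right) - f(x)\right]dx,
\end{equation*}
so that the triangle inequality for integrals reduces matters to bounding $\int_a^b \left|f\left(\frac{a+b}{2}\right) - f(x)\right|dx$.

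Next I would invoke the $M$-Lipschitz property of $f$, namely $|f(u)-f(v)|\le M|u-v|$, applied with $u=\frac{a+b}{2}$ and $v=x$. This gives
\begin{equation*}
\left|f\left(\frac{a+b}{2}\right) - \frac{1}{b-a}\int_a^b f(x)\,dx\right| \le \frac{M}{b-a}\int_a^b \left|\frac{a+b}{2}-x\right|dx.
\end{equation*}

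The remaining step is the elementary computation of $\int_a^b \left|\frac{a+b}{2}-x\right|dx$. I would split the integral at the midpoint $\frac{a+b}{2}$, so that the absolute value can be removed, obtaining two symmetric integrals each equal to $\frac{(b-a)^2}{8}$; summing gives $\frac{(b-a)^2}{4}$. Substituting back produces the desired bound $\frac{M(b-a)}{4}$.

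There is no real obstacle here; the only thing to be careful about is that the Lipschitz constant $M$ is used pointwise under the integral (no differentiability hypothesis is needed), and that the midpoint split produces the sharp constant $\tfrac{1}{4}$ rather than something weaker like $\tfrac{1}{2}$ that one would get from the crude bound $\left|\frac{a+b}{2}-x\right|\le \frac{b-a}{2}$.
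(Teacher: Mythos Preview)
Your argument is correct and is in fact the standard direct proof of this inequality.

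Note, however, that the paper does not supply its own proof of this statement: it is quoted in the preliminaries as a known result from Dragomir--Cho--Kim. The only place the paper touches it again is Remark~2.3(b), where inequality~(\ref{1-b}) is recovered as the special case $\eta(b,a)=b-a$, $x=\frac{a+b}{2}$, $|f'(x)|\le M$ of Theorem~\ref{2.2}. That route goes through Lemma~\ref{2.1} (integration by parts to produce a kernel involving $f'$) and the preinvexity bound on $|f'|$, so it requires $f$ to be differentiable with $|f'|$ preinvex and bounded by $M$. Your direct argument is both shorter and strictly more general: it uses only the Lipschitz condition, with no differentiability or convexity hypothesis, and the midpoint split gives the sharp constant $\tfrac14$ immediately.
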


In \cite{K04}, and in \cite{K04b} U.S. Kirmaci proved the following theorems.

\begin{theorem}
Let $f:I^{\circ }\subset \mathbb{R\rightarrow R}$ be a differentiable
mapping on $I^{\circ },$ $a,b\in I^{\circ }$ with $a<b.$ If the mapping $%
\left\vert f^{\prime }\right\vert $ is convex on $\left[ a,b\right] $, then
we have%
\begin{equation}
\left\vert f\left( \frac{a+b}{2}\right) -\frac{1}{b-a}\dint%
\limits_{a}^{b}f(x)dx\right\vert \leq \frac{\left( b-a\right) }{8}\left(
\left\vert f^{\prime }(a)\right\vert +\left\vert f^{\prime }(b)\right\vert
\right)  \label{1-c}
\end{equation}
\end{theorem}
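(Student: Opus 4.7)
My plan is to derive the inequality from a midpoint-style integral identity obtained by two integration-by-parts on the halves $[a,(a+b)/2]$ and $[(a+b)/2,b]$, then apply the convexity hypothesis on $|f'|$ and compute the resulting polynomial integrals. This is the standard route for estimates of this kind and keeps all the work on the right-hand side.

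Concretely, I would first split $\int_a^b f(x)\,dx$ at $c=(a+b)/2$ and integrate by parts on each piece, using the antiderivatives $x-a$ on $[a,c]$ and $x-b$ on $[c,b]$. Both kernels vanish at $c$, so the boundary contributions collapse cleanly, and after dividing by $b-a$ one obtains the identity
\[
f\!\left(\tfrac{a+b}{2}\right)-\frac{1}{b-a}\int_a^b f(x)\,dx \;=\; \frac{1}{b-a}\left[\int_a^{c}(x-a)f'(x)\,dx+\int_{c}^{b}(x-b)f'(x)\,dx\right].
\]
This is the key identity; everything afterward is an estimation of the right-hand side.

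Next, I would take absolute values, pass them inside the integrals, and change variables via $x=ta+(1-t)b$ so as to exhibit $f'$ at a convex combination of the endpoints: this maps $[a,c]$ to $t\in[1/2,1]$ and $[c,b]$ to $t\in[0,1/2]$, and turns $x-a$ and $b-x$ into $(1-t)(b-a)$ and $t(b-a)$ respectively. The convexity of $|f'|$ then yields $|f'(ta+(1-t)b)|\le t|f'(a)|+(1-t)|f'(b)|$ in every remaining integrand.

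Finally, after substituting this bound, I am left with the four elementary integrals $\int_0^{1/2}t^2\,dt$, $\int_0^{1/2}t(1-t)\,dt$, $\int_{1/2}^1 t(1-t)\,dt$, $\int_{1/2}^1(1-t)^2\,dt$, whose values $\tfrac{1}{24},\tfrac{1}{12},\tfrac{1}{12},\tfrac{1}{24}$ recombine (by the obvious $t\leftrightarrow 1-t$ symmetry) to produce the coefficient $\tfrac{1}{8}$ in front of both $|f'(a)|$ and $|f'(b)|$, giving exactly the claimed bound $\tfrac{b-a}{8}\bigl(|f'(a)|+|f'(b)|\bigr)$. There is no conceptual obstacle here; the only care needed is bookkeeping of signs and orientations in the two substitutions so that the splitting into the $[0,1/2]$ and $[1/2,1]$ weights is consistent.
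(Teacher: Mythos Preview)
Your argument is correct and matches the paper's own route (Lemma~\ref{2.1} and Theorem~\ref{2.2} specialized to $\eta(b,a)=b-a$ and $x=\tfrac{a+b}{2}$): the same integration-by-parts identity with a piecewise-linear kernel, then the convexity bound on $|f'|$, then the same four elementary integrals summing to $\tfrac{1}{8}$. One harmless slip in your description: the kernels $x-a$ and $x-b$ vanish at $a$ and $b$ respectively, not at $c$---the two boundary contributions at $c$ do not vanish but add up to $(b-a)f(c)$, which is precisely what produces the $f\!\left(\tfrac{a+b}{2}\right)$ term in your identity.
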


\begin{theorem}
Let $f:I^{\circ }\subset \mathbb{R\rightarrow R}$ be a differentiable
mapping on $I^{\circ },$ $a,b\in I^{\circ }$ with $a<b,$ and let $p>1.$ If
the mapping $\left\vert f^{\prime }\right\vert ^{\frac{p}{p-1}}$ is convex
on $\left[ a,b\right] $, then we have%
\begin{eqnarray}
&&\left\vert f\left( \frac{a+b}{2}\right) -\frac{1}{b-a}\dint%
\limits_{a}^{b}f(x)dx\right\vert \leq \frac{b-a}{16}\left( \frac{4}{p+1}%
\right) ^{\frac{1}{p}}  \label{1-d} \\
&&\times \left\{ \left( 3\left\vert f^{\prime }(a)\right\vert ^{\frac{p}{p-1}%
}+\left\vert f^{\prime }\left( b\right) \right\vert ^{\frac{p}{p-1}}\right)
^{\frac{p-1}{p}}+\left( 3\left\vert f^{\prime }(b)\right\vert ^{\frac{p}{p-1}%
}+\left\vert f^{\prime }\left( a\right) \right\vert ^{\frac{p}{p-1}}\right)
^{\frac{p-1}{p}}\right\} .  \notag
\end{eqnarray}%
$\ \ \ \ \ \ \ \ \ \ \ \ \ \ \ \ $
\end{theorem}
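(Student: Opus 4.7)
The plan is to obtain the stated midpoint estimate by first establishing an integral representation of $f\!\left(\frac{a+b}{2}\right)-\frac{1}{b-a}\int_a^b f(x)\,dx$ in terms of $f'$, and then combining H\"older's inequality with the convexity assumption on $|f'|^{p/(p-1)}$. After the substitution $x=ta+(1-t)b$ in the integral and integration by parts separately on $[0,1/2]$ and $[1/2,1]$ with the piecewise kernel equal to $t$ on the first subinterval and $t-1$ on the second (the two kernels vanish at $t=0$ and $t=1$ and match the endpoint value $f((a+b)/2)$ at $t=1/2$), one arrives at the identity
\begin{equation*}
f\!\left(\tfrac{a+b}{2}\right)-\tfrac{1}{b-a}\!\int_a^b\! f(x)\,dx
=(b-a)\!\left[\int_0^{1/2}\! t\, f'(ta+(1-t)b)\,dt+\int_{1/2}^{1}(t-1)\, f'(ta+(1-t)b)\,dt\right].
\end{equation*}

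Next I would take absolute values, apply the triangle inequality, and then apply H\"older's inequality with exponents $p$ and $q=p/(p-1)$ to each of the two integrals on the right. This yields the factors $\bigl(\int_0^{1/2} t^p\,dt\bigr)^{1/p}=\bigl(\int_{1/2}^{1}(1-t)^p\,dt\bigr)^{1/p}=\bigl(1/((p+1)2^{p+1})\bigr)^{1/p}$. The remaining H\"older factor on each piece is $\bigl(\int |f'(ta+(1-t)b)|^{p/(p-1)}\,dt\bigr)^{(p-1)/p}$, which the convexity assumption controls via $|f'(ta+(1-t)b)|^{p/(p-1)}\le t|f'(a)|^{p/(p-1)}+(1-t)|f'(b)|^{p/(p-1)}$. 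Integrating this inequality over $[0,1/2]$ and $[1/2,1]$ produces $\tfrac{1}{8}|f'(a)|^{p/(p-1)}+\tfrac{3}{8}|f'(b)|^{p/(p-1)}$ and $\tfrac{3}{8}|f'(a)|^{p/(p-1)}+\tfrac{1}{8}|f'(b)|^{p/(p-1)}$ respectively.

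Finally I would collect the constants: factoring $1/8$ out of each convexity integral and combining with the H\"older factor gives $(b-a)\cdot(1/((p+1)2^{p+1}))^{1/p}\cdot(1/8)^{(p-1)/p}$, and a short exponent computation $2^{-(p+1)/p}\cdot 2^{-3(p-1)/p}=2^{-4+2/p}=\tfrac{1}{16}\cdot 4^{1/p}$ shows this equals $\tfrac{b-a}{16}\bigl(4/(p+1)\bigr)^{1/p}$, which matches the prefactor in \eqref{1-d}. The two summands inside the braces then emerge as $\bigl(3|f'(b)|^{p/(p-1)}+|f'(a)|^{p/(p-1)}\bigr)^{(p-1)/p}$ and $\bigl(3|f'(a)|^{p/(p-1)}+|f'(b)|^{p/(p-1)}\bigr)^{(p-1)/p}$.

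The main obstacle is the bookkeeping of constants: one must carefully pair the correct H\"older factor with the correct convexity integral so that the asymmetric coefficients $3$ and $1$ appear in the two bracketed terms, and track the exponent arithmetic above to land exactly on $\tfrac{1}{16}(4/(p+1))^{1/p}$. A secondary and minor subtlety is justifying the integration-by-parts step when $f'$ is only assumed to exist on $I^{\circ}$ rather than to be continuous; this proceeds via the absolute continuity implicit in the differentiability-plus-integrability setup used throughout this circle of results.
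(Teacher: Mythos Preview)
Your argument is correct and is essentially the same route the paper takes to recover this inequality: the paper's Lemma~2.1 is your integration-by-parts identity (in the parametrization $a+t\eta(b,a)$ rather than your $ta+(1-t)b$), and its Theorem~\ref{2.3}/Corollary~\ref{s1} combine H\"older with the (pre)convexity of $|f'|^{q}$ exactly as you do, after which setting $\eta(b,a)=b-a$ yields \eqref{1-d}. The only slip is a sign in your displayed identity---with the substitution $u=ta+(1-t)b$ one has $du=(a-b)\,dt$, so the prefactor should be $a-b$ rather than $b-a$---but this is irrelevant once you pass to absolute values; also note the paper organizes the convexity estimate slightly differently (first a Hermite--Hadamard bound on each half-interval, then a midpoint bound via \eqref{2-11}), which is equivalent to your direct integration of the convexity inequality and produces the same $3/8,\,1/8$ coefficients.
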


\begin{theorem}
Let $f:I^{\circ }\subset \mathbb{R\rightarrow R}$ be a differentiable
mapping on $I^{\circ },$ $a,b\in I^{\circ }$ with $a<b,$ and let $p>1.$ If
the mapping $\left\vert f^{\prime }\right\vert ^{\frac{p}{p-1}}$ is convex
on $\left[ a,b\right] $, then we have%
\begin{equation}
\left\vert f\left( \frac{a+b}{2}\right) -\frac{1}{b-a}\dint%
\limits_{a}^{b}f(x)dx\right\vert \leq \frac{b-a}{4}\left( \frac{4}{p+1}%
\right) ^{\frac{1}{p}}\left( \left\vert f^{\prime }(a)\right\vert
+\left\vert f^{\prime }(b)\right\vert \right) .  \label{1-e}
\end{equation}
\end{theorem}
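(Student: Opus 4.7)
The plan is to deduce \eqref{1-e} as a direct corollary of the already-established inequality \eqref{1-d}, exploiting that the two estimates share the prefactor $(b-a)(4/(p+1))^{1/p}$ up to a numerical constant. Writing $q=p/(p-1)$ for the conjugate exponent, the right-hand side of \eqref{1-d} has the form
\begin{equation*}
\frac{b-a}{16}\left(\frac{4}{p+1}\right)^{1/p}\left\{\left(3|f^{\prime }(a)|^{q}+|f^{\prime}(b)|^{q}\right)^{1/q}+\left(3|f^{\prime }(b)|^{q}+|f^{\prime }(a)|^{q}\right)^{1/q}\right\},
\end{equation*}
so the remaining task reduces to bounding each of the two $1/q$-th-power expressions from above by a linear combination of $|f^{\prime }(a)|$ and $|f^{\prime }(b)|$.

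The main tool I would invoke is the elementary subadditivity inequality $(X+Y)^{r}\leq X^{r}+Y^{r}$, valid for $X,Y\geq 0$ and $0<r\leq 1$. Because $p>1$, the exponent $r:=1/q=(p-1)/p$ lies in $(0,1)$, so applying this with $X=3|f^{\prime }(a)|^{q}$ and $Y=|f^{\prime }(b)|^{q}$, together with the collapse $(|f^{\prime }(a)|^{q})^{1/q}=|f^{\prime }(a)|$, gives
\begin{equation*}
\left(3|f^{\prime }(a)|^{q}+|f^{\prime }(b)|^{q}\right)^{1/q}\leq 3^{1/q}|f^{\prime }(a)|+|f^{\prime }(b)|,
\end{equation*}
and the symmetric estimate for the companion term. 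Summing the two yields $(1+3^{1/q})(|f^{\prime }(a)|+|f^{\prime }(b)|)$.

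To finish I would use the crude numerical bound $3^{1/q}\leq 3$, which holds because $1/q<1$ and $3>1$, hence $1+3^{1/q}\leq 4$. Substituting back into \eqref{1-d} and absorbing the factor $4$ into the denominator $16$ produces
\begin{equation*}
\frac{b-a}{16}\left(\frac{4}{p+1}\right)^{1/p}\cdot 4\cdot(|f^{\prime }(a)|+|f^{\prime }(b)|)=\frac{b-a}{4}\left(\frac{4}{p+1}\right)^{1/p}(|f^{\prime }(a)|+|f^{\prime }(b)|),
\end{equation*}
which is precisely \eqref{1-e}. The argument presents no genuine analytic obstacle; the only point worth double-checking is the exponent arithmetic $q\cdot (1/q)=1$ that allows the $q$-th powers of $|f^{\prime }(a)|$ and $|f^{\prime }(b)|$ to collapse back to first powers under the outer exponent. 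Note that this derivation shows \eqref{1-e} to be strictly weaker than \eqref{1-d}; its appeal lies purely in the cleaner form of the right-hand side.
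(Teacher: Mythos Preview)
Your argument is correct and is essentially identical to the paper's own derivation: the paper also deduces \eqref{1-e} from \eqref{1-d} by applying the subadditivity inequality $(X+Y)^{s}\leq X^{s}+Y^{s}$ for $0\leq s<1$ (stated there as \eqref{2-6}) to the two terms $(3|f'(a)|^{q}+|f'(b)|^{q})^{1/q}$ and $(3|f'(b)|^{q}+|f'(a)|^{q})^{1/q}$, followed by the crude bound $3^{1/q}\leq 3$.
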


\begin{theorem}
Let $f:I^{\circ }\subset \mathbb{R\rightarrow R}$ be a differentiable
mapping on $I^{\circ },$ $a,b\in I^{\circ }$ with $a<b,$ and let $p>1.$ If
the mapping $\left\vert f^{\prime }\right\vert ^{p}$ is convex on $\left[ a,b%
\right] $, then 
\begin{equation}
\left\vert f\left( \frac{a+b}{2}\right) -\frac{1}{b-a}\dint%
\limits_{a}^{b}f(x)dx\right\vert \leq \left( \frac{3^{1-\frac{1}{p}}}{8}%
\right) \left( b-a\right) \left( \left\vert f^{\prime }(a)\right\vert
+\left\vert f^{\prime }(b)\right\vert \right)  \label{1-ee}
\end{equation}
\end{theorem}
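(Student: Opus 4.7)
The plan is to follow the same Kirmaci-type strategy as in the preceding three theorems of this section, but to apply the \emph{power-mean} form of H\"older's inequality rather than classical H\"older, so that convexity of $\left\vert f^{\prime }\right\vert ^{p}$ can be used directly on the integrand.

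The starting point is the identity underlying (\ref{1-c})--(\ref{1-e}),
\begin{equation*}
f\left(\frac{a+b}{2}\right) - \frac{1}{b-a}\int_{a}^{b}f(x)\,dx = (b-a)\left[\int_{0}^{1/2}(-t)\,f^{\prime}(ta+(1-t)b)\,dt + \int_{1/2}^{1}(1-t)\,f^{\prime}(ta+(1-t)b)\,dt\right],
\end{equation*}
obtained by integration by parts on each subinterval. Taking absolute values inside the brackets produces two integrals $I_{1}$ and $I_{2}$ that are handled symmetrically. On each one I would apply the power-mean inequality $\int w\,g\,dt \leq \bigl(\int w\,dt\bigr)^{1-1/p}\bigl(\int w\,g^{p}\,dt\bigr)^{1/p}$ with weight $w=t$ (respectively $w=1-t$), and then use convexity of $\left\vert f^{\prime}\right\vert^{p}$ in the form $\left\vert f^{\prime}(ta+(1-t)b)\right\vert^{p} \leq t\left\vert f^{\prime}(a)\right\vert^{p} + (1-t)\left\vert f^{\prime}(b)\right\vert^{p}$. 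The bookkeeping reduces to the elementary moments $\int_{0}^{1/2}t\,dt = 1/8$, $\int_{0}^{1/2}t^{2}\,dt = 1/24$, $\int_{0}^{1/2}t(1-t)\,dt = 1/12$, and their reflections on $[1/2,1]$. A short computation then yields
\begin{equation*}
I_{1} \leq \frac{1}{8\cdot 3^{1/p}}\bigl[\left\vert f^{\prime}(a)\right\vert^{p} + 2\left\vert f^{\prime}(b)\right\vert^{p}\bigr]^{1/p},\qquad I_{2} \leq \frac{1}{8\cdot 3^{1/p}}\bigl[2\left\vert f^{\prime}(a)\right\vert^{p} + \left\vert f^{\prime}(b)\right\vert^{p}\bigr]^{1/p}.
\end{equation*}

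The remaining step converts this two-term sum into the clean bound (\ref{1-ee}). I would apply the subadditivity $(A+B)^{1/p} \leq A^{1/p}+B^{1/p}$, valid for $p\geq 1$ and $A,B\geq 0$ because $x\mapsto x^{1/p}$ is concave and vanishes at the origin, to each bracket. This produces a prefactor of $(1+2^{1/p})/(8\cdot 3^{1/p})$ in front of $\left\vert f^{\prime}(a)\right\vert + \left\vert f^{\prime}(b)\right\vert$. The crude estimate $1+2^{1/p}\leq 3$, which holds whenever $p\geq 1$ since $2^{1/p}\leq 2$, then turns the prefactor into $3^{1-1/p}/8$, and multiplying by $(b-a)$ delivers (\ref{1-ee}). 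I do not anticipate any real obstacle; the only point at which sharpness is traded away is precisely this last estimate, and that is what produces the compact closed form stated in the theorem (as opposed to the asymmetric sum-of-two-roots form appearing in (\ref{1-d})).
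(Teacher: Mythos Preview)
Your argument is correct and matches the paper's own derivation. The paper recovers this inequality as the special case $\eta(b,a)=b-a$, $x=\tfrac{a+b}{2}$ of its Theorem~\ref{2.4} and Corollary~\ref{s2}: the same identity from Lemma~\ref{2.1}, the same power-mean (weighted H\"older) step, the same use of convexity of $|f'|^{p}$, then the subadditivity inequality~(\ref{2-6}) followed by $1+2^{1/p}\le 3$ --- exactly the chain you outline.
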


In recent years several extentions and generalizations have been considered
for classical convexity. A significant generalization of convex functions is
that of invex functions introduced by Hanson in \cite{H81}. Weir and Mond 
\cite{WM98} introduced the concept of preinvex functions and applied it to
the establisment of the sufficient optimality conditions and duality in
nonlinear programming. Pini \cite{P91} introduced the concept of
prequasiinvex function as a generalization of invex functions. Later, Mohan
and Neogy \cite{MN95} obtained some properties of generalized preinvex
functions. Noor \cite{N07b} has established some Hermite-Hadamard type
inequalities for preinvex and log-preinvex functions.

The aim of this paper is to establish some Ostrowski type inequalities for
functions whose derivatives in absolute value are preinvex. Now we recall
some notions in invexity analysis which will be used throught the paper (see 
\cite{A05,MG08,YL01} and references therein)

Let $f:A\mathbb{\rightarrow R}$ and $\eta :A\times A\rightarrow 
\mathbb{R}
,$where $A$ is a nonempty set in $%
\mathbb{R}
^{n}$, be continuous functions.

\begin{definition}
The set $A\subset 
\mathbb{R}
^{n}$ is said to be invex with respect to $\eta (.,.)$, if for every $x,y\in
A$ and $t\in \left[ 0,1\right] ,$%
\begin{equation*}
x+t\eta (y,x)\in A.
\end{equation*}

The invex set $A$ is also called a $\eta -$connected set.
\end{definition}

It is obvious that every convex set is invex with respect to $\eta (y,x)=y-x$%
, but there exist invex sets which are not convex \cite{A05}.

\begin{definition}
The function $f$ on the invex set $A$ is said to be preinvex with respect to 
$\eta $ if 
\begin{equation*}
f\left( x+t\eta (y,x)\right) \leq \left( 1-t\right) f(x)+tf(y),\ \forall
x,y\in A,\ t\in \left[ 0,1\right] .
\end{equation*}

The function $f$ is said to be preconcave if and only if $-f$ \ is preinvex.
\end{definition}

\bigskip Note that every convex function is a preinvex function, but the
converse is not true \cite{MG08}. For example $f(x)=-\left\vert x\right\vert
,\ x\in 
\mathbb{R}
,$ is not a convex function, but it is a preinvex function with respect to 
\begin{equation*}
\eta (x,y)=\left\{ 
\begin{array}{cc}
x-y, & if\ \ xy\geq 0 \\ 
y-x, & if\ \ xy<0%
\end{array}%
\right. .
\end{equation*}

We also need the following assumption regarding the function $\eta $ which
is due to Mohan and Neogy \cite{MN95}:

\textbf{Condition C:} Let $A\subset $ $%
\mathbb{R}
^{n}$ be an open invex subset with respect to $\eta :A\times A\rightarrow 
\mathbb{R}
.$ For any $x,y\in A$ and any $t\in \left[ 0,1\right] ,$%
\begin{eqnarray*}
\eta \left( y,y+t\eta (x,y)\right) &=&-t\eta (x,y) \\
\eta \left( x,y+t\eta (x,y)\right) &=&(1-t)\eta (x,y).
\end{eqnarray*}

Note that for every $x,y\in A$ and every $t_{1},t_{2}\in \left[ 0,1\right] $
from condition C, we have%
\begin{equation*}
\eta \left( y+t_{2}\eta (x,y),y+t_{1}\eta (x,y)\right) =(t_{2}-t_{1})\eta
(x,y).
\end{equation*}

There are many vector functions that satisfy condition C \cite{MG08},
besides the trivial case $\eta (x,y)=x-y.$ For example let $A=%
\mathbb{R}
\backslash \left\{ 0\right\} $ and%
\begin{equation*}
\eta (x,y)=\left\{ 
\begin{array}{cc}
x-y, & if\ \ x>0,\ y>0 \\ 
x-y, & if\ \ x<0,\ y<0 \\ 
-y, & otherwise.%
\end{array}%
\right.
\end{equation*}%
Then $A$ is an invex set and $\eta $ satisfies condition C.

\section{Main Results}

\begin{lemma}
\label{2.1}Let $A\subset $ $%
\mathbb{R}
$ be an open invex subset with respect to $\eta :A\times A\rightarrow 
\mathbb{R}
$ and $a,b\in A$ with $a<a+\eta (b,a).$ Suppose that $f:A\rightarrow 
\mathbb{R}
$ is a differentiable function. If $f^{\prime }$ is integrable on $\left[
a,a+\eta (b,a)\right] $, then the following equality holds:%
\begin{eqnarray*}
&&f(x)-\frac{1}{\eta (b,a)}\dint\limits_{a}^{a+\eta (b,a)}f(u)du \\
&=&\eta (b,a)\left( \dint\limits_{0}^{\frac{x-a}{\eta (b,a)}}tf^{\prime
}\left( a+t\eta (b,a)\right) dt+\dint\limits_{\frac{x-a}{\eta (b,a)}%
}^{1}\left( t-1\right) f^{\prime }\left( a+t\eta (b,a)\right) dt\right)
\end{eqnarray*}%
for all $x\in \left[ a,a+\eta (b,a)\right] $.
\end{lemma}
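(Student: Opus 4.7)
The plan is to verify the identity by computing the right-hand side directly via integration by parts, just as one does for the classical Montgomery identity underlying Ostrowski's inequality. Introduce the shorthand $\lambda = \frac{x-a}{\eta(b,a)} \in [0,1]$ (well-defined since $a < a+\eta(b,a)$ and $x \in [a,a+\eta(b,a)]$), so the RHS becomes
\[
\eta(b,a)\left(\int_0^{\lambda} t\, f'(a+t\eta(b,a))\,dt + \int_{\lambda}^{1} (t-1)\, f'(a+t\eta(b,a))\,dt\right).
\]

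For the first integral, take $u=t$, $dv = f'(a+t\eta(b,a))\,dt$, so that $du = dt$ and $v = \frac{1}{\eta(b,a)} f(a+t\eta(b,a))$. This yields
\[
\int_0^{\lambda} t\, f'(a+t\eta(b,a))\,dt = \frac{\lambda}{\eta(b,a)} f(a+\lambda \eta(b,a)) - \frac{1}{\eta(b,a)}\int_0^{\lambda} f(a+t\eta(b,a))\,dt.
\]
For the second integral, take $u = t-1$ and the same $dv$, producing
\[
\int_{\lambda}^{1} (t-1)\, f'(a+t\eta(b,a))\,dt = -\frac{\lambda-1}{\eta(b,a)} f(a+\lambda\eta(b,a)) - \frac{1}{\eta(b,a)}\int_{\lambda}^{1} f(a+t\eta(b,a))\,dt.
\]

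Adding the two expressions, the two boundary contributions at $t=\lambda$ combine as $\frac{\lambda - (\lambda-1)}{\eta(b,a)} f(a+\lambda\eta(b,a)) = \frac{1}{\eta(b,a)} f(x)$, since $a+\lambda\eta(b,a) = x$. The two remaining integrals merge into $-\frac{1}{\eta(b,a)}\int_0^{1} f(a+t\eta(b,a))\,dt$, which, by the substitution $u = a+t\eta(b,a)$ (so $du = \eta(b,a)\,dt$), equals $-\frac{1}{\eta(b,a)^2}\int_a^{a+\eta(b,a)} f(u)\,du$. Multiplying the sum by the outer factor $\eta(b,a)$ gives precisely $f(x) - \frac{1}{\eta(b,a)}\int_a^{a+\eta(b,a)} f(u)\,du$, which is the claimed identity.

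There is no real obstacle here: the derivation is a routine Montgomery-identity computation, and no hypothesis on $\eta$ (such as Condition C) is needed at this stage — we only use that $a+\eta(b,a) > a$ so the substitution is valid, and that $f'$ is integrable on $[a,a+\eta(b,a)]$, which legitimizes the integration by parts. The only place to be careful is the choice $v = \frac{1}{\eta(b,a)} f(a+t\eta(b,a))$, so that the chain rule correctly produces $f'(a+t\eta(b,a))$, and the bookkeeping of signs at the interior endpoint $t=\lambda$ where the two pieces must cancel to leave a single copy of $f(x)$.
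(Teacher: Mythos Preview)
Your proof is correct and follows exactly the approach indicated in the paper, which merely states that the equality is obtained by integration by parts on the right-hand integrals together with a change of variable and leaves the details to the reader. You have supplied those details accurately, including the correct handling of the boundary terms at $t=\lambda$ and the substitution $u=a+t\eta(b,a)$.
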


Since $A\subset $ $%
\mathbb{R}
$ is an invex subset with respect to $\eta :A\times A\rightarrow 
\mathbb{R}
$ and $a,b\in A$, for all $t\in \left[ 0,1\right] $ we have $a+t\eta
(b,a)\in A.$ A simple proof of equality can be given by performing an
integration by parts in the integrals from the right side and changing the
variable. The details are left to the interested reader.

The following result may be stated:

\begin{theorem}
\label{2.2}Let $A\subset $ $%
\mathbb{R}
$ be an open invex subset with respect to $\eta :A\times A\rightarrow 
\mathbb{R}
$ and $a,b\in A$ with $a<a+\eta (b,a).$ Suppose that $f:A\rightarrow 
\mathbb{R}
$ is a differentiable function and $\left\vert f^{\prime }\right\vert $ is
preinvex function on $A$. If $f^{\prime }$ is integrable on $\left[ a,a+\eta
(b,a)\right] $,then the following inequality holds:%
\begin{eqnarray}
&&\left\vert f(x)-\frac{1}{\eta (b,a)}\dint\limits_{a}^{a+\eta
(b,a)}f(u)du\right\vert \leq \frac{\eta (b,a)}{6}  \label{2-1} \\
&&\times \left\{ \left[ 3\left( \frac{x-a}{\eta (b,a)}\right) ^{2}-2\left( 
\frac{x-a}{\eta (b,a)}\right) ^{3}+2\left( \frac{a+\eta (b,a)-x}{\eta (b,a)}%
\right) ^{3}\right] \left\vert f^{\prime }(a)\right\vert \right.  \notag \\
&&\left. +\left[ 1-3\left( \frac{x-a}{\eta (b,a)}\right) ^{2}+4\left( \frac{%
x-a}{\eta (b,a)}\right) ^{3}\right] \left\vert f^{\prime }(b)\right\vert
\right\}  \notag
\end{eqnarray}%
for all $x\in \left[ a,a+\eta (b,a)\right] $. The constant $\frac{1}{6}$ is
best possible in the sense that it cannot be replaced by a smaller value.
\end{theorem}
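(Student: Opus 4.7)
The plan is to start from the identity in Lemma~\ref{2.1}, apply the triangle inequality, use the preinvexity of $|f'|$ to control the integrand on $[0,1]$, and then evaluate the resulting polynomial integrals. No substantial obstacle is expected; the whole proof is a weighted-$L^1$-type estimate along the segment $t\mapsto a+t\eta(b,a)$, with the preinvexity hypothesis playing the same role that convexity of $|f'|$ plays in the classical Ostrowski--Kirmaci inequalities \eqref{1-a}--\eqref{1-c}.

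Set $\lambda := \dfrac{x-a}{\eta(b,a)} \in [0,1]$. Taking absolute values in the identity of Lemma~\ref{2.1} (and using $t-1\leq 0$ on $[\lambda,1]$) yields
\[
\left|f(x) - \frac{1}{\eta(b,a)}\int_a^{a+\eta(b,a)} f(u)\,du\right| \leq \eta(b,a)\left(\int_0^\lambda t\,|f'(a+t\eta(b,a))|\,dt + \int_\lambda^1 (1-t)\,|f'(a+t\eta(b,a))|\,dt\right).
\]
Then, since $|f'|$ is preinvex on $A$, one has $|f'(a+t\eta(b,a))|\leq (1-t)|f'(a)| + t|f'(b)|$ for every $t\in[0,1]$. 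Substituting this majorant and separating the contributions of $|f'(a)|$ and $|f'(b)|$ reduces the proof to computing the four elementary integrals
\[
\int_0^\lambda t(1-t)\,dt = \tfrac{\lambda^2}{2}-\tfrac{\lambda^3}{3}, \qquad \int_0^\lambda t^2\,dt = \tfrac{\lambda^3}{3}, \qquad \int_\lambda^1 (1-t)^2\,dt = \tfrac{(1-\lambda)^3}{3}, \qquad \int_\lambda^1 t(1-t)\,dt = \tfrac{1}{6}-\tfrac{\lambda^2}{2}+\tfrac{\lambda^3}{3}.
\]
Regrouping, the coefficient of $|f'(a)|$ becomes $\tfrac{1}{6}\bigl[3\lambda^2-2\lambda^3+2(1-\lambda)^3\bigr]$ and the coefficient of $|f'(b)|$ becomes $\tfrac{1}{6}\bigl[1-3\lambda^2+4\lambda^3\bigr]$, so restoring $\lambda = (x-a)/\eta(b,a)$ and $1-\lambda = (a+\eta(b,a)-x)/\eta(b,a)$ recovers \eqref{2-1} exactly.

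The slightly less mechanical point is the sharpness of $\tfrac{1}{6}$. Here I would specialize to $\eta(b,a)=b-a$ (so that preinvexity collapses to convexity) and test the affine function $f(t)=t$, for which $|f'|\equiv 1$ is trivially preinvex. Taking $x=a$ gives $\lambda=0$, and then both sides of \eqref{2-1} equal $(b-a)/2$: the left-hand side because $|a-(a+b)/2|=(b-a)/2$, and the right-hand side because $\tfrac{b-a}{6}(2+1)=(b-a)/2$. Since equality is attained in this one-parameter family, the constant $\tfrac{1}{6}$ cannot be replaced by any smaller value, completing the proof.
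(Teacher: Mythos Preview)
Your proof is correct and follows essentially the same route as the paper: Lemma~\ref{2.1}, triangle inequality, preinvexity of $|f'|$, and the same four polynomial integrals, with sharpness tested on the affine function $f(t)=t$. The only cosmetic difference is that the paper evaluates sharpness at the right endpoint $x=a+\eta(b,a)$ (i.e.\ $\lambda=1$) rather than at $x=a$, and does not bother to specialize $\eta(b,a)=b-a$ since constants are preinvex for any $\eta$; either endpoint works equally well.
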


\begin{proof}
By lemma \ref{2.1} and since $\left\vert f^{\prime }\right\vert $ is
preinvex, we have%
\begin{eqnarray*}
&&\left\vert f(x)-\frac{1}{\eta (b,a)}\dint\limits_{a}^{a+\eta
(b,a)}f(u)du\right\vert \\
&\leq &\eta (b,a)\left\{ \dint\limits_{0}^{\frac{x-a}{\eta (b,a)}%
}t\left\vert f^{\prime }\left( a+t\eta (b,a)\right) \right\vert
dt+\dint\limits_{\frac{x-a}{\eta (b,a)}}^{1}\left( 1-t\right) \left\vert
f^{\prime }\left( a+t\eta (b,a)\right) \right\vert dt\right\} \\
&\leq &\eta (b,a)\left\{ \dint\limits_{0}^{\frac{x-a}{\eta (b,a)}}t\left[
\left( 1-t\right) \left\vert f^{\prime }(a)\right\vert +t\left\vert
f^{\prime }(b)\right\vert \right] dt+\dint\limits_{\frac{x-a}{\eta (b,a)}%
}^{1}\left( 1-t\right) \left[ \left( 1-t\right) \left\vert f^{\prime
}(a)\right\vert +t\left\vert f^{\prime }(b)\right\vert \right] dt\right\} \\
&\leq &\eta (b,a)\left\{ \left[ \frac{1}{2}\left( \frac{x-a}{\eta (b,a)}%
\right) ^{2}-\frac{1}{3}\left( \frac{x-a}{\eta (b,a)}\right) ^{3}+\frac{1}{3}%
\left( \frac{a+\eta (b,a)-x}{\eta (b,a)}\right) ^{3}\right] \left\vert
f^{\prime }(a)\right\vert \right. \\
&&+\left. \left[ \frac{1}{6}-\frac{1}{2}\left( \frac{x-a}{\eta (b,a)}\right)
^{2}+\frac{2}{3}\left( \frac{x-a}{\eta (b,a)}\right) ^{3}\right] \left\vert
f^{\prime }(b)\right\vert \right\}
\end{eqnarray*}%
where we have used the fact that%
\begin{eqnarray*}
&&\dint\limits_{0}^{\frac{x-a}{\eta (b,a)}}t\left( 1-t\right)
dt+\dint\limits_{\frac{x-a}{\eta (b,a)}}^{1}\left( 1-t\right) ^{2}dt \\
&=&\frac{1}{2}\left( \frac{x-a}{\eta (b,a)}\right) ^{2}-\frac{1}{3}\left( 
\frac{x-a}{\eta (b,a)}\right) ^{3}+\frac{1}{3}\left( \frac{a+\eta (b,a)-x}{%
\eta (b,a)}\right) ^{3}
\end{eqnarray*}%
and%
\begin{equation*}
\dint\limits_{0}^{\frac{x-a}{\eta (b,a)}}t^{2}dt+\dint\limits_{\frac{x-a}{%
\eta (b,a)}}^{1}t\left( 1-t\right) dt=\frac{1}{6}-\frac{1}{2}\left( \frac{x-a%
}{\eta (b,a)}\right) ^{2}+\frac{2}{3}\left( \frac{x-a}{\eta (b,a)}\right)
^{3}.
\end{equation*}%
To prove that the constant $\frac{1}{6}$ is best possible, let us assume
that (\ref{2-1}) holds with constant $K>0,$ i.e.,%
\begin{eqnarray*}
&&\left\vert f(x)-\frac{1}{\eta (b,a)}\dint\limits_{a}^{a+\eta
(b,a)}f(u)du\right\vert \leq K.\eta (b,a) \\
&&\times \left\{ \left[ 3\left( \frac{x-a}{\eta (b,a)}\right) ^{2}-2\left( 
\frac{x-a}{\eta (b,a)}\right) ^{3}+2\left( \frac{a+\eta (b,a)-x}{\eta (b,a)}%
\right) ^{3}\right] \left\vert f^{\prime }(a)\right\vert \right. \\
&&+\left[ 1-3\left( \frac{x-a}{\eta (b,a)}\right) ^{2}+4\left( \frac{x-a}{%
\eta (b,a)}\right) ^{3}\right] \left\vert f^{\prime }(b)\right\vert .
\end{eqnarray*}%
Let $f(x)=x,$ and then set $x=a+\eta (b,a),$ we get%
\begin{equation*}
\frac{\eta (b,a)}{2}\leq 3K.\eta (b,a),
\end{equation*}%
which gives $K\geq \frac{1}{6},$ and the inequality (\ref{2-1}) is proved.
\end{proof}

\begin{remark}
Suppose that all the assumptions of Theorem \ref{2.2} are satisfied.

(a) If we choose $\eta (b,a)=b-a$ and $x=\frac{2a+\eta (b,a)}{2}$, then we
obtain the inequality%
\begin{equation}
\left\vert f\left( \frac{a+b}{2}\right) -\frac{1}{b-a}\dint%
\limits_{a}^{b}f(u)du\right\vert \leq \frac{b-a}{8}\left( \left\vert
f^{\prime }(a)\right\vert +\left\vert f^{\prime }(b)\right\vert \right)
\label{2-a}
\end{equation}%
which is the same with the inequality (\ref{1-c}).

(b) In (a) with $\left\vert f^{\prime }(x)\right\vert \leq M,~M>0,$ we get
the inequality%
\begin{equation*}
\left\vert f\left( \frac{a+b}{2}\right) -\frac{1}{b-a}\dint%
\limits_{a}^{b}f(u)du\right\vert \leq M\frac{\left( b-a\right) }{4}
\end{equation*}%
which is the same with the inequality (\ref{1-b}).

(c) If the mapping $\eta $ satisfies condition C then by use of the
preinvexity of $\left\vert f^{\prime }\right\vert $ we get 
\begin{eqnarray}
\left\vert f^{\prime }\left( a+t\eta (b,a)\right) \right\vert &=&\left\vert
f^{\prime }\left( a+\eta (b,a)+(1-t)\eta (a,a+\eta (b,a))\right) \right\vert
\notag \\
&\leq &t\left\vert f^{\prime }\left( a+\eta (b,a)\right) \right\vert
+(1-t)\left\vert f^{\prime }(a)\right\vert .  \label{2-11}
\end{eqnarray}%
for every $t\in \left[ 0,1\right] .$

Using the inequality (\ref{2-11}) in the proof of Theorem \ref{2.2}, then
the inequality (\ref{2-1}) becomes the following inequality:%
\begin{eqnarray}
&&\left\vert f(x)-\frac{1}{\eta (b,a)}\dint\limits_{a}^{a+\eta
(b,a)}f(u)du\right\vert \leq \frac{\eta (b,a)}{6}  \label{2-b} \\
&&\times \left\{ \left[ 3\left( \frac{x-a}{\eta (b,a)}\right) ^{2}-2\left( 
\frac{x-a}{\eta (b,a)}\right) ^{3}+2\left( \frac{a+\eta (b,a)-x}{\eta (b,a)}%
\right) ^{3}\right] \left\vert f^{\prime }(a)\right\vert \right.  \notag \\
&&\left. +\left[ 1-3\left( \frac{x-a}{\eta (b,a)}\right) ^{2}+4\left( \frac{%
x-a}{\eta (b,a)}\right) ^{3}\right] \left\vert f^{\prime }(a+\eta
(b,a))\right\vert \right\}  \notag
\end{eqnarray}%
for all $x\in \left[ a,a+\eta (b,a)\right] .$ We note that by use of the
preinvexity of $\left\vert f^{\prime }\right\vert $ we have%
\begin{equation*}
\left\vert f^{\prime }(a+\eta (b,a))\right\vert \leq \left\vert f^{\prime
}(b)\right\vert .
\end{equation*}%
Therefore, the inequality (\ref{2-b}) is better than the inequality (\ref%
{2-1}).
\end{remark}

\begin{theorem}
\label{2.3}Let $A\subset $ $%
\mathbb{R}
$ be an open invex subset with respect to $\eta :A\times A\rightarrow 
\mathbb{R}
$ and $a,b\in A$ with $a<a+\eta (b,a).$ Suppose that $f:A\rightarrow 
\mathbb{R}
$ is a differentiable function such that $\left\vert f^{\prime }\right\vert
^{q}$ is preinvex function on $\left[ a,a+\eta (b,a)\right] $ for some fixed 
$q>1$. If $f^{\prime }$ is integrable on $\left[ a,a+\eta (b,a)\right] $ and 
$\eta $ satisfies condition C, then for each $x\in \left[ a,a+\eta (b,a)%
\right] $ the following inequality holds:%
\begin{eqnarray}
&&\ \ \ \ \ \ \ \ \ \ \ \ \ \ \ \ \ \ \ \ \left\vert f(x)-\frac{1}{\eta (b,a)%
}\dint\limits_{a}^{a+\eta (b,a)}f(u)du\right\vert  \label{2-2} \\
&\leq &\left( \frac{1}{p+1}\right) ^{\frac{1}{p}}\left\{ \frac{\left(
x-a\right) ^{2}}{\eta (b,a)}\left( \frac{\left\vert f^{\prime
}(a)\right\vert ^{q}+\left\vert f^{\prime }\left( x\right) \right\vert ^{q}}{%
2}\right) ^{\frac{1}{q}}\right.  \notag \\
&&\left. +\frac{\left( a+\eta (b,a)-x\right) ^{2}}{\eta (b,a)}\left( \frac{%
\left\vert f^{\prime }(a+\eta (b,a))\right\vert ^{q}+\left\vert f^{\prime
}\left( x\right) \right\vert ^{q}}{2}\right) ^{\frac{1}{q}}\right\}  \notag
\end{eqnarray}%
where $\frac{1}{p}+\frac{1}{q}=1.$
\end{theorem}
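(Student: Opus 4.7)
The plan is to apply H\"older's inequality to each of the two integrals produced by Lemma~\ref{2.1} and then control the resulting $L^q$ norms using preinvexity of $|f'|^q$ together with Condition~C.

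Set $s=(x-a)/\eta(b,a)$. Lemma~\ref{2.1} and the triangle inequality give
\[ \left|f(x)-\tfrac{1}{\eta(b,a)}\int_a^{a+\eta(b,a)}f(u)\,du\right|\leq \eta(b,a)\left[\int_0^{s}t|f'(a+t\eta(b,a))|\,dt+\int_{s}^{1}(1-t)|f'(a+t\eta(b,a))|\,dt\right]. \]
H\"older on each integral produces the factors $\bigl(s^{p+1}/(p+1)\bigr)^{1/p}$ and $\bigl((1-s)^{p+1}/(p+1)\bigr)^{1/p}$ from the $t^p$ and $(1-t)^p$ pieces, so the remaining task is to bound $\int_0^{s}|f'(a+t\eta(b,a))|^q\,dt$ and $\int_{s}^{1}|f'(a+t\eta(b,a))|^q\,dt$.

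The central step uses the identity $\eta(a+t_2\eta(b,a),a+t_1\eta(b,a))=(t_2-t_1)\eta(b,a)$ recorded after Condition~C. Specializing gives $\eta(x,a)=s\eta(b,a)$ and $\eta(a+\eta(b,a),x)=(1-s)\eta(b,a)$, hence
\[ a+t\eta(b,a)=a+(t/s)\,\eta(x,a)\text{ for }t\in[0,s],\qquad a+t\eta(b,a)=x+\tfrac{t-s}{1-s}\,\eta(a+\eta(b,a),x)\text{ for }t\in[s,1]. \]
Preinvexity of $|f'|^q$ then bounds $|f'(a+t\eta(b,a))|^q$ by $(1-t/s)|f'(a)|^q+(t/s)|f'(x)|^q$ on $[0,s]$, and by the analogous convex combination of $|f'(x)|^q$ and $|f'(a+\eta(b,a))|^q$ on $[s,1]$. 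Integrating these linear functions of $t$ yields $\tfrac{s}{2}(|f'(a)|^q+|f'(x)|^q)$ and $\tfrac{1-s}{2}(|f'(x)|^q+|f'(a+\eta(b,a))|^q)$, respectively.

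Assembling the H\"older factors with the $q$-th roots of these integrals and using $(p+1)/p+1/q=2$ to collapse $s^{(p+1)/p}\cdot s^{1/q}$ to $s^2$ (and similarly for $1-s$), one recovers the terms $(x-a)^2/\eta(b,a)$ and $(a+\eta(b,a)-x)^2/\eta(b,a)$ that appear in (\ref{2-2}). The main obstacle is the Condition~C reparameterization: one must verify that the point $x$ genuinely sits at the endpoint of an invex segment from $a$ on one side of the split and from $a+\eta(b,a)$ on the other, so that preinvexity of $|f'|^q$ can be applied with the correct endpoints. Once that bookkeeping is in place, the rest reduces to routine H\"older and exponent arithmetic.
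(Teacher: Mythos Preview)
Your proof is correct and follows essentially the same route as the paper: apply Lemma~\ref{2.1}, use H\"older's inequality on each of the two integrals, and then bound the $L^q$ pieces via preinvexity of $|f'|^q$ on the subintervals $[a,x]$ and $[x,a+\eta(b,a)]$, which Condition~C legitimizes. The paper packages the last step as a Hermite--Hadamard--type bound (inequality~(\ref{2-5})) and then invokes it on each subinterval, whereas you inline that argument by reparameterizing explicitly; the content is the same.
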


\begin{proof}
We first note that if $\left\vert f^{\prime }\right\vert ^{q}$ is a preinvex
function on $\left[ a,a+\eta (b,a)\right] $ and the mapping $\eta $
satisfies condition C then for every $t\in \left[ 0,1\right] ,$it yields the
inequality (\ref{2-11}) and similarly%
\begin{eqnarray}
\left\vert f^{\prime }\left( a+(1-t)\eta (b,a)\right) \right\vert ^{q}
&=&\left\vert f^{\prime }\left( a+\eta (b,a)+t\eta (a,a+\eta (b,a))\right)
\right\vert ^{q}  \notag \\
&\leq &(1-t)\left\vert f^{\prime }\left( a+\eta (b,a)\right) \right\vert
^{q}+t\left\vert f^{\prime }(a)\right\vert ^{q}.  \label{2-3}
\end{eqnarray}

By adding these inequalities we have

\begin{equation}
\left\vert f^{\prime }\left( a+t\eta (b,a)\right) \right\vert
^{q}+\left\vert f^{\prime }\left( a+(1-t)\eta (b,a)\right) \right\vert
^{q}\leq \left\vert f^{\prime }(a)\right\vert ^{q}+\left\vert f^{\prime
}\left( a+\eta (b,a)\right) \right\vert ^{q}  \label{2-4}
\end{equation}

Then integrating the inequality (\ref{2-4}) with respect to $t$ over $\left[
0,1\right] ,$ we obtain%
\begin{equation}
\frac{1}{\eta (b,a)}\dint\limits_{a}^{a+\eta (b,a)}\left\vert f^{\prime
}(t)\right\vert ^{q}dt\leq \frac{\left\vert f^{\prime }(a)\right\vert
^{q}+\left\vert f^{\prime }\left( a+\eta (b,a)\right) \right\vert ^{q}}{2}
\label{2-5}
\end{equation}%
From lemma \ref{2.1} and using H\"{o}lder inequality, we have%
\begin{eqnarray*}
&&\left\vert f(x)-\frac{1}{\eta (b,a)}\dint\limits_{a}^{a+\eta
(b,a)}f(u)du\right\vert \\
&\leq &\eta (b,a)\left( \dint\limits_{0}^{\frac{x-a}{\eta (b,a)}%
}t^{p}dt\right) ^{\frac{1}{p}}\left( \dint\limits_{0}^{\frac{x-a}{\eta (b,a)}%
}\left\vert f^{\prime }\left( a+t\eta (b,a)\right) \right\vert ^{q}dt\right)
^{\frac{1}{q}} \\
&&+\eta (b,a)\left( \dint\limits_{\frac{x-a}{\eta (b,a)}}^{1}\left(
1-t\right) ^{p}dt\right) ^{\frac{1}{p}}\left( \dint\limits_{\frac{x-a}{\eta
(b,a)}}^{1}\left\vert f^{\prime }\left( a+t\eta (b,a)\right) \right\vert
^{q}dt\right) ^{\frac{1}{q}} \\
&\leq &\left( \frac{1}{p+1}\right) ^{\frac{1}{p}}\left\{ \frac{\left(
x-a\right) ^{2}}{\eta (b,a)}\left( \frac{\left\vert f^{\prime }\left(
a\right) \right\vert ^{q}+\left\vert f^{\prime }\left( x\right) \right\vert
^{q}}{2}\right) ^{\frac{1}{q}}\right. \\
&&\left. +\frac{\left( a+\eta (b,a)-x\right) ^{2}}{\eta (b,a)}\left( \frac{%
\left\vert f^{\prime }\left( a+\eta (b,a)\right) \right\vert ^{q}+\left\vert
f^{\prime }\left( x\right) \right\vert ^{q}}{2}\right) ^{\frac{1}{q}}\right\}
\end{eqnarray*}%
where we use the fact that%
\begin{equation*}
\dint\limits_{0}^{\frac{x-a}{\eta (b,a)}}t^{p}dt=\frac{1}{p+1}\left( \frac{%
x-a}{\eta (b,a)}\right) ^{p+1},\ \ \dint\limits_{\frac{x-a}{\eta (b,a)}%
}^{1}\left( 1-t\right) ^{p}dt=\frac{1}{p+1}\left( \frac{a+\eta (b,a)-x}{\eta
(b,a)}\right) ^{p+1},
\end{equation*}%
and by (\ref{2-5}) we get%
\begin{equation*}
\dint\limits_{0}^{\frac{x-a}{\eta (b,a)}}\left\vert f^{\prime }\left(
a+t\eta (b,a)\right) \right\vert ^{q}dt\leq \frac{x-a}{\eta (b,a)}\left( 
\frac{\left\vert f^{\prime }\left( a\right) \right\vert ^{q}+\left\vert
f^{\prime }\left( x\right) \right\vert ^{q}}{2}\right) ,
\end{equation*}%
\begin{equation*}
\dint\limits_{\frac{x-a}{\eta (b,a)}}^{1}\left\vert f^{\prime }\left(
a+t\eta (b,a)\right) \right\vert ^{q}dt\leq \frac{a+\eta (b,a)-x}{\eta (b,a)}%
\left( \frac{\left\vert f^{\prime }\left( a+\eta (b,a)\right) \right\vert
^{q}+\left\vert f^{\prime }\left( x\right) \right\vert ^{q}}{2}\right) .
\end{equation*}
\end{proof}

\begin{corollary}
Suppose that all the assumptions of Theorem \ref{2.3} are satisfied. If we
choose $\left\vert f^{\prime }\left( x\right) \right\vert \leq M,~M>0,$ for
each $x\in \left[ a,a+\eta (b,a)\right] ,$ then we have%
\begin{equation*}
\left\vert f(x)-\frac{1}{\eta (b,a)}\dint\limits_{a}^{a+\eta
(b,a)}f(u)du\right\vert \leq \left( \frac{1}{p+1}\right) ^{\frac{1}{p}%
}M\left( \frac{\left( x-a\right) ^{2}+\left( a+\eta (b,a)-x\right) ^{2}}{%
\eta (b,a)}\right) .
\end{equation*}
\end{corollary}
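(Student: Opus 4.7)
The plan is to apply Theorem \ref{2.3} directly and then replace each of the mixed $q$-means by the uniform bound $M$. More precisely, since by hypothesis $|f'(y)| \leq M$ for every $y \in [a, a+\eta(b,a)]$, and in particular for $y \in \{a,\, x,\, a+\eta(b,a)\}$, I expect to estimate
\[
\left( \frac{|f'(a)|^q + |f'(x)|^q}{2} \right)^{\frac{1}{q}} \leq \left( \frac{M^q + M^q}{2} \right)^{\frac{1}{q}} = M,
\]
and analogously
\[
\left( \frac{|f'(a+\eta(b,a))|^q + |f'(x)|^q}{2} \right)^{\frac{1}{q}} \leq M.
\]

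Substituting both estimates into the right-hand side of inequality (\ref{2-2}) and factoring out $M$ together with $(1/(p+1))^{1/p}$ immediately yields
\[
\left| f(x) - \frac{1}{\eta(b,a)} \int_a^{a+\eta(b,a)} f(u)\,du \right| \leq \left( \frac{1}{p+1} \right)^{\frac{1}{p}} M \left( \frac{(x-a)^2 + (a+\eta(b,a)-x)^2}{\eta(b,a)} \right),
\]
which is the claimed bound.

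There is no real obstacle here: all the work has been done in Theorem \ref{2.3}, and the hypothesis that $|f'|$ is bounded by $M$ on $[a,a+\eta(b,a)]$ makes the two $q$-power means trivially controlled by $M$. The only minor point worth noting is that $x \in [a,a+\eta(b,a)]$ so that the bound $|f'(x)| \leq M$ applies, and that $a+\eta(b,a)$ lies in the same interval, so the bound $|f'(a+\eta(b,a))| \leq M$ also applies; both follow from the assumption placed on $|f'|$ over the whole interval.
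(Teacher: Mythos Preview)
Your proof is correct and is exactly the intended argument: the paper states this result as an immediate corollary of Theorem~\ref{2.3} without giving a separate proof, and the only step is to bound each of the two $q$-power means by $M$ using the hypothesis $|f'(y)|\le M$ on $[a,a+\eta(b,a)]$, precisely as you do.
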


\begin{corollary}
\label{s1} Suppose that all the assumptions of Theorem \ref{2.3} are
satisfied. If we choose $x=\frac{2a+\eta (b,a)}{2}$, then using the
preinvexity of $\left\vert f^{\prime }\right\vert ^{q}$ on $\left[ a,a+\eta
(b,a)\right] $ and\ by inequality (\ref{2-11}) for $t=\frac{1}{2}$ we have%
\begin{eqnarray*}
&&\left\vert f\left( \frac{2a+\eta (b,a)}{2}\right) -\frac{1}{\eta (b,a)}%
\dint\limits_{a}^{a+\eta (b,a)}f(u)du\right\vert \\
&\leq &\left( \frac{1}{p+1}\right) ^{\frac{1}{p}}\left\{ \frac{\eta (b,a)}{4}%
\left( \frac{3\left\vert f^{\prime }(a)\right\vert ^{q}+\left\vert f^{\prime
}\left( a+\eta (b,a)\right) \right\vert ^{q}}{4}\right) ^{\frac{1}{q}}\right.
\\
&&\left. +\frac{\eta (b,a)}{4}\left( \frac{3\left\vert f^{\prime }(a+\eta
(b,a))\right\vert ^{q}+\left\vert f^{\prime }\left( a\right) \right\vert ^{q}%
}{4}\right) ^{\frac{1}{q}}\right\}
\end{eqnarray*}
\end{corollary}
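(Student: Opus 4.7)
The plan is to specialize Theorem~\ref{2.3} at the midpoint $x=\frac{2a+\eta(b,a)}{2}$ and then use (\ref{2-11}), applied to $|f'|^q$, to eliminate the awkward term $|f'(x)|^q$ in favor of $|f'(a)|^q$ and $|f'(a+\eta(b,a))|^q$.

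First I would evaluate the geometric coefficients at the midpoint: since $x-a=a+\eta(b,a)-x=\eta(b,a)/2$, both quantities $(x-a)^2/\eta(b,a)$ and $(a+\eta(b,a)-x)^2/\eta(b,a)$ collapse to $\eta(b,a)/4$. Substituting directly into the bound of Theorem~\ref{2.3} yields
\[
\Bigl|f(x)-\tfrac{1}{\eta(b,a)}\!\!\int_{a}^{a+\eta(b,a)}\!\!f(u)\,du\Bigr|
\le \Bigl(\tfrac{1}{p+1}\Bigr)^{\!1/p}\!\!\left\{\tfrac{\eta(b,a)}{4}\!\left(\tfrac{|f'(a)|^q+|f'(x)|^q}{2}\right)^{\!1/q}\!\!+\tfrac{\eta(b,a)}{4}\!\left(\tfrac{|f'(a+\eta(b,a))|^q+|f'(x)|^q}{2}\right)^{\!1/q}\right\}.
\]

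Next I would bound $|f'(x)|^q$. Since $|f'|^q$ is preinvex and $\eta$ satisfies Condition~C, the derivation leading to (\ref{2-11}) works verbatim with $|f'|$ replaced by $|f'|^q$; evaluating it at $t=1/2$ gives
\[
|f'(x)|^q \;\le\; \tfrac{1}{2}|f'(a+\eta(b,a))|^q+\tfrac{1}{2}|f'(a)|^q.
\]
Inserting this into the two inner averages produces
\[
\tfrac{|f'(a)|^q+|f'(x)|^q}{2}\le\tfrac{3|f'(a)|^q+|f'(a+\eta(b,a))|^q}{4},
\qquad
\tfrac{|f'(a+\eta(b,a))|^q+|f'(x)|^q}{2}\le\tfrac{3|f'(a+\eta(b,a))|^q+|f'(a)|^q}{4},
\]
which, after substituting back, is exactly the claimed inequality.

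There is no serious obstacle here; the only subtle point is justifying the $|f'|^q$-version of (\ref{2-11}), but this is immediate because Condition~C is a property of $\eta$ alone, so the same algebraic rewriting $a+t\eta(b,a)=a+\eta(b,a)+(1-t)\eta(a,a+\eta(b,a))$ used in Remark~2.3(c) applies, and preinvexity of $|f'|^q$ finishes the estimate. The rest is bookkeeping of the constants $\eta(b,a)/4$.
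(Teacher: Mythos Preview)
Your proof is correct and follows exactly the approach indicated in the paper's statement of the corollary: specialize Theorem~\ref{2.3} at the midpoint (giving the two $\eta(b,a)/4$ factors), then bound $|f'(x)|^q$ via the $|f'|^q$-analogue of (\ref{2-11}) at $t=\tfrac12$ to obtain the $\tfrac{3\cdot+\,\cdot}{4}$ averages. There is nothing to add.
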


\begin{remark}
In Corollary \ref{s1}, if we take $\eta (b,a)=b-a,$ then we have the
inequality%
\begin{eqnarray*}
&&\left\vert f\left( \frac{a+b}{2}\right) -\frac{1}{b-a}\dint%
\limits_{a}^{b}f(u)du\right\vert \\
&\leq &\frac{b-a}{16}\left( \frac{4}{p+1}\right) ^{\frac{1}{p}}\left\{
\left( 3\left\vert f^{\prime }(a)\right\vert ^{q}+\left\vert f^{\prime
}\left( b\right) \right\vert ^{q}\right) ^{\frac{1}{q}}+\left( 3\left\vert
f^{\prime }(b)\right\vert ^{q}+\left\vert f^{\prime }\left( a\right)
\right\vert ^{q}\right) ^{\frac{1}{q}}\right\}
\end{eqnarray*}%
which is the same with the inequality (\ref{1-d}). Let $a_{1}=3\left\vert
f^{\prime }(a)\right\vert ^{q},\ b_{1}=\left\vert f^{\prime }\left( b\right)
\right\vert ^{q},\ a_{2}=3\left\vert f^{\prime }(b)\right\vert ^{q},\
b_{2}=\left\vert f^{\prime }\left( a\right) \right\vert ^{q}.$ Here $0<\frac{%
1}{q}<1,$ for $q>1.$Using the fact that%
\begin{equation}
\dsum\limits_{k=1}^{n}\left( a_{k}+b_{k}\right) ^{s}\leq
\dsum\limits_{k=1}^{n}a_{k}^{s}+\dsum\limits_{k=1}^{n}b_{k}^{s}  \label{2-6}
\end{equation}%
for $\left( 0\leq s<1\right) ,\ a_{1},a_{2},...a_{n}\geq 0,\
b_{1},b_{2},...b_{n}\geq 0,\ $we obtain the inequality%
\begin{equation*}
\left\vert f\left( \frac{a+b}{2}\right) -\frac{1}{b-a}\dint%
\limits_{a}^{b}f(u)du\right\vert \leq \frac{b-a}{4}\left( \frac{4}{p+1}%
\right) ^{\frac{1}{p}}\left( \left\vert f^{\prime }(a)\right\vert
+\left\vert f^{\prime }(b)\right\vert \right)
\end{equation*}%
which is the same with the inequality (\ref{1-e}).
\end{remark}

\begin{theorem}
\label{2.4}Let $A\subset $ $%
\mathbb{R}
$ be an open invex subset with respect to $\eta :A\times A\rightarrow 
\mathbb{R}
$ and $a,b\in A$ with $a<a+\eta (b,a).$ Suppose that $f:A\rightarrow 
\mathbb{R}
$ is a differentiable function and $\left\vert f^{\prime }\right\vert ^{q}$
is preinvex function on $\left[ a,a+\eta (b,a)\right] $ for some fixed $%
q\geq 1$. If $f^{\prime }$ is integrable on $\left[ a,a+\eta (b,a)\right] $,
then the following inequality holds:%
\begin{eqnarray*}
&&\left\vert f(x)-\frac{1}{\eta (b,a)}\dint\limits_{a}^{a+\eta
(b,a)}f(u)du\right\vert \leq \eta (b,a)\left( \frac{1}{2}\right) ^{1-\frac{1%
}{q}} \\
&&\times \left\{ \left( \frac{x-a}{\eta (b,a)}\right) ^{2\left( 1-\frac{1}{q}%
\right) }\left[ \frac{\left( x-a\right) ^{2}\left( 3\eta (b,a)-2x+2a\right) 
}{6\eta ^{3}(b,a)}\left\vert f^{\prime }\left( a\right) \right\vert ^{q}+%
\frac{1}{3}\left( \frac{x-a}{\eta (b,a)}\right) ^{3}\left\vert f^{\prime
}\left( b\right) \right\vert ^{q}\right] ^{\frac{1}{q}}\right.  \\
&&\left. +\left( \frac{a+\eta (b,a)-x}{\eta (b,a)}\right) ^{2\left( 1-\frac{1%
}{q}\right) }\left[ \frac{1}{3}\left( \frac{a+\eta (b,a)-x}{\eta (b,a)}%
\right) ^{3}\left\vert f^{\prime }\left( a\right) \right\vert ^{q}+\left( 
\frac{1}{6}+\frac{\left( x-a\right) ^{2}\left( 2x-3\eta (b,a)-2a\right) }{%
6\eta ^{3}(b,a)}\right) \left\vert f^{\prime }\left( b\right) \right\vert
^{q}\right] ^{\frac{1}{q}}\right\} 
\end{eqnarray*}%
for each $x\in \left[ a,a+\eta (b,a)\right] .$
\end{theorem}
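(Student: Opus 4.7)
The plan is to combine the identity of Lemma~\ref{2.1} with the power-mean (weighted H\"older) inequality and then invoke the preinvexity of $\vert f^{\prime}\vert^{q}$. Observe that, unlike Theorem~\ref{2.3}, the statement does not invoke Condition~C; this tells me that preinvexity should be applied directly along the segment $a+t\eta(b,a)$, rather than being re-parametrised via $a+\eta(b,a)$ through the identity in Condition~C.

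First I would use Lemma~\ref{2.1} and the triangle inequality to bound the left-hand side by a sum of two integrals of the form $\int_{0}^{s} t\,\vert f^{\prime}(a+t\eta(b,a))\vert\,dt$ and $\int_{s}^{1}(1-t)\,\vert f^{\prime}(a+t\eta(b,a))\vert\,dt$, where $s=(x-a)/\eta(b,a)\in[0,1]$. To each of these I would apply the power-mean inequality $\int w\,g \leq \bigl(\int w\bigr)^{1-1/q}\bigl(\int w\,g^{q}\bigr)^{1/q}$ with the natural weights $w(t)=t$ and $w(t)=1-t$. The factors $\bigl(\int_{0}^{s} t\,dt\bigr)^{1-1/q}=(s^{2}/2)^{1-1/q}$ and $\bigl(\int_{s}^{1}(1-t)\,dt\bigr)^{1-1/q}=((1-s)^{2}/2)^{1-1/q}$ account precisely for the outer constant $(1/2)^{1-1/q}$ and for the powers $s^{2(1-1/q)}$, $(1-s)^{2(1-1/q)}$ appearing in the claim.

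Next, for the remaining $L^{q}$-type integrals I would apply the preinvexity of $\vert f^{\prime}\vert^{q}$ in the form $\vert f^{\prime}(a+t\eta(b,a))\vert^{q}\leq (1-t)\vert f^{\prime}(a)\vert^{q}+t\vert f^{\prime}(b)\vert^{q}$, which reduces everything to the elementary primitives of $t(1-t)$, $t^{2}$, $(1-t)^{2}$, and $t(1-t)$ over $[0,s]$ or $[s,1]$. A direct computation gives, for instance, $\int_{0}^{s} t(1-t)\,dt=s^{2}/2-s^{3}/3$ and $\int_{s}^{1}t(1-t)\,dt=1/6-s^{2}/2+s^{3}/3$; substituting back $s=(x-a)/\eta(b,a)$, and using the algebraic identity $s^{2}(3-2s)/6=(x-a)^{2}(3\eta(b,a)-2x+2a)/(6\eta^{3}(b,a))$ together with its analogue for the second bracket, reproduces the two bracketed expressions in the theorem.

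I expect the only real obstacle to be the bookkeeping at the end: the mix of powers of $\eta(b,a)$, $(x-a)$ and $(a+\eta(b,a)-x)$ must be repackaged so that the single overall factor $\eta(b,a)$ re-emerges outside and the internal fractions take the slightly asymmetric form printed in the statement. No analytic input beyond the weighted H\"older inequality and the definition of preinvexity is needed.
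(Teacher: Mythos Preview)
Your proposal is correct and follows essentially the same route as the paper: Lemma~\ref{2.1}, the power-mean (weighted H\"older) inequality with weights $t$ and $1-t$, then direct preinvexity $\vert f^{\prime}(a+t\eta(b,a))\vert^{q}\le (1-t)\vert f^{\prime}(a)\vert^{q}+t\vert f^{\prime}(b)\vert^{q}$ and the elementary integral evaluations. Your observation that Condition~C is not needed is in fact sharper than the paper's presentation, which cites inequality~(\ref{2-11}) (a Condition~C consequence) in the proof but actually uses only the plain preinvexity bound you describe, as confirmed by the final expressions involving $\vert f^{\prime}(b)\vert$ rather than $\vert f^{\prime}(a+\eta(b,a))\vert$.
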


\begin{proof}
By Lemma \ref{2.1} and inequality (\ref{2-11}), and using the well known
power mean inequality, we have%
\begin{eqnarray*}
&&\left\vert f(x)-\frac{1}{\eta (b,a)}\dint\limits_{a}^{a+\eta
(b,a)}f(u)du\right\vert  \\
&\leq &\eta (b,a)\left( \dint\limits_{0}^{\frac{x-a}{\eta (b,a)}}tdt\right)
^{1-\frac{1}{q}}\left( \dint\limits_{0}^{\frac{x-a}{\eta (b,a)}}t\left\vert
f^{\prime }\left( a+t\eta (b,a)\right) \right\vert ^{q}dt\right) ^{\frac{1}{q%
}} \\
&&+\eta (b,a)\left( \dint\limits_{\frac{x-a}{\eta (b,a)}}^{1}\left(
1-t\right) dt\right) ^{1-\frac{1}{q}}\left( \dint\limits_{\frac{x-a}{\eta
(b,a)}}^{1}\left( 1-t\right) \left\vert f^{\prime }\left( a+t\eta
(b,a)\right) \right\vert ^{q}dt\right) ^{\frac{1}{q}} \\
&\leq &\eta (b,a)\left( \frac{1}{2}\right) ^{1-\frac{1}{q}}\left\{ \left( 
\frac{x-a}{\eta (b,a)}\right) ^{2\left( 1-\frac{1}{q}\right) }\left[ \frac{%
\left( x-a\right) ^{2}\left( 3\eta (b,a)-2x+2a\right) }{6\eta ^{3}(b,a)}%
\left\vert f^{\prime }\left( a\right) \right\vert ^{q}\right. \right.  \\
&&\left. +\frac{1}{3}\left( \frac{x-a}{\eta (b,a)}\right) ^{3}\left\vert
f^{\prime }\left( b\right) \right\vert ^{q}\right] ^{\frac{1}{q}}+\left( 
\frac{a+\eta (b,a)-x}{\eta (b,a)}\right) ^{2\left( 1-\frac{1}{q}\right) }%
\left[ \frac{1}{3}\left( \frac{a+\eta (b,a)-x}{\eta (b,a)}\right)
^{3}\left\vert f^{\prime }\left( a\right) \right\vert ^{q}\right.  \\
&&\left. \left. +\left( \frac{1}{6}+\frac{\left( x-a\right) ^{2}\left(
2x-3\eta (b,a)-2a\right) }{6\eta ^{3}(b,a)}\right) \left\vert f^{\prime
}\left( b\right) \right\vert ^{q}\right] ^{\frac{1}{q}}\right\} 
\end{eqnarray*}%
where we use the fact that%
\begin{equation*}
\dint\limits_{0}^{\frac{x-a}{\eta (b,a)}}tdt=\frac{1}{2}\left( \frac{x-a}{%
\eta (b,a)}\right) ^{2},
\end{equation*}%
\begin{eqnarray*}
&&\dint\limits_{0}^{\frac{x-a}{\eta (b,a)}}t\left\vert f^{\prime }\left(
a+t\eta (b,a)\right) \right\vert ^{q}dt \\
&\leq &\frac{\left( x-a\right) ^{2}\left( 3\eta (b,a)-2x+2a\right) }{6\eta
^{3}(b,a)}\left\vert f^{\prime }\left( a\right) \right\vert ^{q}+\frac{1}{3}%
\left( \frac{x-a}{\eta (b,a)}\right) ^{3}\left\vert f^{\prime }\left(
b\right) \right\vert ^{q},
\end{eqnarray*}%
\begin{equation*}
\dint\limits_{\frac{x-a}{\eta (b,a)}}^{1}\left( 1-t\right) dt=\frac{1}{2}%
\left( \frac{a+\eta (b,a)-x}{\eta (b,a)}\right) ^{2},
\end{equation*}%
\begin{eqnarray*}
&&\dint\limits_{\frac{x-a}{\eta (b,a)}}^{1}\left( 1-t\right) \left\vert
f^{\prime }\left( a+t\eta (b,a)\right) \right\vert ^{q}dt \\
&\leq &\frac{1}{3}\left( \frac{a+\eta (b,a)-x}{\eta (b,a)}\right)
^{3}\left\vert f^{\prime }\left( a\right) \right\vert ^{q}+\left( \frac{1}{6}%
+\frac{\left( x-a\right) ^{2}\left( 2x-3\eta (b,a)-2a\right) }{6\eta
^{3}(b,a)}\right) \left\vert f^{\prime }\left( b\right) \right\vert ^{q}.
\end{eqnarray*}%
The proof is completed.
\end{proof}

\begin{corollary}
\label{s2}Suppose that all the assumptions of Theorem \ref{2.4} are
satisfied. If we take $x=\frac{2a+\eta (b,a)}{2}$, then by the inequality (%
\ref{2-6}) we have the following inequality%
\begin{equation*}
\left\vert f\left( \frac{2a+\eta (b,a)}{2}\right) -\frac{1}{b-a}%
\dint\limits_{a}^{b}f(x)dx\right\vert \leq \left( \frac{3^{1-\frac{1}{q}}}{8}%
\right) \eta (b,a)\left( \left\vert f^{\prime }(a)\right\vert +\left\vert
f^{\prime }(b)\right\vert \right) .
\end{equation*}
\end{corollary}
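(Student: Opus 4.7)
The plan is to specialize Theorem \ref{2.4} at the midpoint $x=\frac{2a+\eta(b,a)}{2}$ and then apply the subadditivity-type inequality (\ref{2-6}) to the sum of two $q$-th roots that appears.

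First I would exploit the symmetry of the midpoint choice: $x-a = a+\eta(b,a)-x = \eta(b,a)/2$, so both ratios $\frac{x-a}{\eta(b,a)}$ and $\frac{a+\eta(b,a)-x}{\eta(b,a)}$ equal $\frac{1}{2}$. Plugging into the two bracketed expressions inside $[\,\cdot\,]^{1/q}$ in Theorem \ref{2.4}, routine arithmetic gives (using $3\eta(b,a)-2x+2a=2\eta(b,a)$ and $2x-3\eta(b,a)-2a=-2\eta(b,a)$):
\[
\frac{(x-a)^{2}(3\eta(b,a)-2x+2a)}{6\eta^{3}(b,a)} = \frac{1}{12},\qquad \frac{1}{3}\Bigl(\tfrac{x-a}{\eta(b,a)}\Bigr)^{3} = \frac{1}{24},
\]
and $\frac{1}{6}+\frac{(x-a)^{2}(2x-3\eta(b,a)-2a)}{6\eta^{3}(b,a)} = \frac{1}{12}$. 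So the two inner brackets are $\tfrac{1}{24}(2|f'(a)|^{q}+|f'(b)|^{q})$ and $\tfrac{1}{24}(|f'(a)|^{q}+2|f'(b)|^{q})$.

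Next I would combine the outer prefactors. The common factor in front of the braces is $\eta(b,a)(1/2)^{1-1/q}$, and each summand carries $(1/2)^{2(1-1/q)}$. Pulling out $(1/24)^{1/q}$ from each root and combining exponents,
\[
\eta(b,a)\Bigl(\tfrac{1}{2}\Bigr)^{3(1-1/q)}\Bigl(\tfrac{1}{24}\Bigr)^{1/q} = \eta(b,a)\cdot\frac{1}{8}\cdot\Bigl(\frac{8}{24}\Bigr)^{1/q} = \eta(b,a)\cdot\frac{3^{-1/q}}{8},
\]
so the estimate reduces to
\[
\eta(b,a)\,\frac{3^{-1/q}}{8}\Bigl\{[2|f'(a)|^{q}+|f'(b)|^{q}]^{1/q}+[|f'(a)|^{q}+2|f'(b)|^{q}]^{1/q}\Bigr\}.
\]

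Finally I would apply (\ref{2-6}) with $s=1/q\in(0,1]$ in the three-term form (splitting $2|f'(a)|^{q}=|f'(a)|^{q}+|f'(a)|^{q}$) to obtain $[2|f'(a)|^{q}+|f'(b)|^{q}]^{1/q}\le 2|f'(a)|+|f'(b)|$ and symmetrically $[|f'(a)|^{q}+2|f'(b)|^{q}]^{1/q}\le |f'(a)|+2|f'(b)|$. Adding these gives $3(|f'(a)|+|f'(b)|)$, and multiplying by $\eta(b,a)\,3^{-1/q}/8$ yields the claimed bound $\frac{3^{1-1/q}}{8}\eta(b,a)(|f'(a)|+|f'(b)|)$. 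The main obstacle here is purely bookkeeping: keeping track of the three exponent factors $(1/2)^{1-1/q}$, $(1/2)^{2(1-1/q)}$ and $(1/24)^{1/q}$ so that they collapse exactly to $3^{-1/q}/8$; and choosing the right three-term splitting in (\ref{2-6}) (rather than the naive two-term split, which would only give the coefficient $1+2^{1/q}$ and not the cleaner $3$).
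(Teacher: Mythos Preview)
Your argument is correct and is exactly the route the paper intends: substitute the midpoint $x=\frac{2a+\eta(b,a)}{2}$ into Theorem \ref{2.4}, simplify the coefficients, and then apply inequality (\ref{2-6}) (in the iterated/three-term form) to collapse the two $q$-th roots into $3(|f'(a)|+|f'(b)|)$. The paper itself omits all of these computations, so you have simply filled in the bookkeeping it left to the reader.
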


\begin{remark}
Suppose that all the assumptions of Theorem \ref{2.4} are satisfied.

(a) In Corollary \ref{s2}, if we take $\eta (b,a)=b-a,$ then we have the
inequality 
\begin{equation*}
\left\vert f\left( \frac{a+b}{2}\right) -\frac{1}{b-a}\dint%
\limits_{a}^{b}f(x)dx\right\vert \leq \left( \frac{3^{1-\frac{1}{q}}}{8}%
\right) \left( b-a\right) \left( \left\vert f^{\prime }(a)\right\vert
+\left\vert f^{\prime }(b)\right\vert \right)
\end{equation*}%
which is the same with the inequality (\ref{1-ee}).

(b) If the mapping $\eta $ satisfies condition C then by use of the
inequality (\ref{2-11}) for $\left\vert f^{\prime }\right\vert ^{q}$ in the
proof of Theorem \ref{2.4},we get%
\begin{eqnarray*}
&&\left\vert f(x)-\frac{1}{\eta (b,a)}\dint\limits_{a}^{a+\eta
(b,a)}f(u)du\right\vert \leq \eta (b,a)\left( \frac{1}{2}\right) ^{1-\frac{1%
}{q}} \\
&&\times \left\{ \left( \frac{x-a}{\eta (b,a)}\right) ^{2\left( 1-\frac{1}{q}%
\right) }\left[ \frac{\left( x-a\right) ^{2}\left( 3\eta (b,a)-2x+2a\right) 
}{6\eta ^{3}(b,a)}\left\vert f^{\prime }\left( a\right) \right\vert
^{q}\right. \right.  \\
&&\left. +\frac{1}{3}\left( \frac{x-a}{\eta (b,a)}\right) ^{3}\left\vert
f^{\prime }\left( a+\eta (b,a)\right) \right\vert ^{q}\right] ^{\frac{1}{q}%
}+\left( \frac{a+\eta (b,a)-x}{\eta (b,a)}\right) ^{2\left( 1-\frac{1}{q}%
\right) }\left[ \frac{1}{3}\left( \frac{a+\eta (b,a)-x}{\eta (b,a)}\right)
^{3}\left\vert f^{\prime }\left( a\right) \right\vert ^{q}\right.  \\
&&\left. \left. +\left( \frac{1}{6}+\frac{\left( x-a\right) ^{2}\left(
2x-3\eta (b,a)-2a\right) }{6\eta ^{3}(b,a)}\right) \left\vert f^{\prime
}\left( a+\eta (b,a)\right) \right\vert ^{q}\right] ^{\frac{1}{q}}\right\} 
\end{eqnarray*}
\end{remark}

\end{document}